\numberwithin{equation}{section}
\newtheorem{thm}{Theorem}[section]
\newtheorem{lem}[thm]{Lemma}
\newtheorem{rem}[thm]{Remark}
\newcommand\cK{{\mathcal K}}
\newcommand\cO{{\mathcal O}}
\newcommand\bC{{\mathbb C}}
\newcommand\bE{{\mathbb E}}
\newcommand\bL{{\mathbb L}}
\newcommand\bN{{\mathbb N}}
\newcommand\bP{{\mathbb P}}
\newcommand\bR{{\mathbb R}}
\newcommand\ve{\varepsilon}
\newcommand\vf{\varphi}
\newcommand\Prob{\bP}
\newcommand{\BV}{\mathrm{BV}}
\newcommand{\Var}{\operatorname{Var}}
\newcommand\Id{\mathbb{I}}
\newcommand{\sign}{\operatorname{sign}}
\newcommand{\meta}{\operatorname{meta}}
\begin{document}

\title[Rare events, escape rates and quasistationarity]
{Rare events, escape rates and quasistationarity:\\ some exact formulae}
\author{Gerhard Keller and Carlangelo Liverani}
\address{Gerhard Keller\\Department Mathematik\\Universit\"at
  Erlangen-N\"urnberg\\Bismarckstr. 1$\frac12$, 91052 Erlangen, Germany}
\email{{\tt keller@mi.uni-erlangen.de}}
\address{Carlangelo Liverani\\
Dipartimento di Matematica\\
II Universit\`{a} di Roma (Tor Vergata)\\
Via della Ricerca Scientifica, 00133 Roma, Italy.}
\email{{\tt liverani@mat.uniroma2.it}}
\date{\today}
\begin{abstract}
We present a common framework to study decay and exchanges rates in a wide class of dynamical systems. Several applications, ranging form the metric theory of continuons fractions and the Shannon capacity of contrained systems to the decay rate of metastable states, are given.
\end{abstract}
\thanks{It is a pleasure to thank L.Bunimovich for calling our attention to
  formula \eqref{eq:doubling} in the special case of the doubling map,
  and V.Baladi and M.Demers for discussions on this formula. We are indebted to the ESI where, during the Workshop on Hyperbolic Dynamical Systems with Singularities (2008), this work was started. L.C. thanks the ENS, Paris, where he was invited during part of this work. Also we like to thank the Institut Henri Poincare - Centre Emile Borel where, during the trimester M\'ecanique statistique, probabilit\'es et syst\`emes de particules (2008), this work was finished. Finally, G.K. acknowledges the support by a grant from the DFG}
\keywords{rare event, decay rate, metastability, quasistationarity,
  eigenvalue, perturbation}
\subjclass[2000]{37C30,47A55}
\maketitle
\section{Introduction}\label{sec:intro}
In applications of the theory of dynamical systems to concrete situations it
is often necessary to study rare events.  Examples are open systems with a
small chance to escape and metastable states.  Although
much numerical work exists (e.g. see \cite{BD, DH} and references therein) not
many rigorous results are available. In principle one can try to apply
perturbation theory but the existing theorems \cite{keller-liverani-1999, GL}
do not produce very sharp results. A similar situation occurs in the study
of linear response theory. While perturbation theory applies to a wide class
of smooth systems \cite{ruelle-1997}, this is no longer true when discontinuities are present in the
system. In that case not only perturbation theory does not imply linear response, but
in fact there are cases when linear response itself is violated, see
\cite{baladi-2007,baladi-smania-2008} and references therein.  
Since an open system is typically modeled by a hole in the system
(that is by a region in which the dynamics stops once the trajectory enters
it), the presence of discontinuities is inevitable.

Accordingly, one could expect that the quasi-invariant measure (which describes
the long time distribution of the trajectories conditioned to the event that
they have not left the system, i.e. they have not entered the hole) and the
escape rate (that measures the rate at which trajectories leave the system)
depend in a very erratic (non-differentiable) way on the size and position
of the hole. Yet, almost nothing is known about such situations.

In the present paper we prove a general theorem providing a first order
expansion of escape rates and exchange rates in terms of the strength of the
rare effect that is investigated (whereby refining the results in
\cite{keller-liverani-1999} even-though limited to the present setting). We
derive from this theorem explicit formulae for escape rates (both in one
dimensional and two-dimensional cases) and for the exchange rate between two
quasi-invariant sets (metastability). To our knowledge such formulae were
known only for rather special cases and are completely knew in the 
generality presented here.

Just to give an impression of the wide applicability of our main result, we
list a few examples that are detailed in section~\ref{sec:applications-1D}.

Let $z$ be a periodic point of period $p$ for the doubling map $x\mapsto
2x\mod1$ and consider intervals $I_\ve\ni z$ of length $\ve$. Denote the decay
rate for the hole $I_\ve$ by $\lambda_\ve$; i.e. the Lebesgue measure of the
set of points that are not trapped by $I_\ve$ during the first $n$ iterations
of the map decreases asymptotically like $\lambda_\ve^n$. Then $\lambda_\ve$
has the following first order expansion at $\ve=0$
\begin{equation}\label{eq:doubling}
  \lambda_\ve=1-\ve\cdot\left(1-2^{-p}\right)+o(\ve)\ .
\end{equation}
Similar formulae can be obtained when the hole is a union of several
intervals, and analogous results hold of course for a coin tossing process
that is stopped once a pattern of heads and tails from a given finite
collection is observed.

We turn to continued fraction expansions. Let $m_{k,n}$
be the Lebesgue measure of those points $x\in[0,1]$ whose continued fraction
expansion up to the $n$-th digit does not contain a block of $k$ consecutive
$1$'s. For each $k$, these numbers decrease asymptotically like some
$\lambda_k^n$ and, setting $z=\frac{\sqrt5-1}2$, we show
\begin{equation}\label{eq:gauss}
      \lim_{k\to\infty}\frac{1-\lambda_{k}}{z^{2k}}
    =
    \frac{z^3(1+z^2)^2}{\ln2}\approx0.6504\,.
\end{equation}

  The above are one dimensional examples. Along the same lines one can treat
  piecewise expanding maps in higher dimensions provided the invariant density
  is not too irregular in a neighborhood of the holes. This is not guaranteed
  working only with the usual multivariate $\BV$-spaces, but rather by variants
  as developed by Blank \cite{blank-1987} or Saussol \cite{saussol}. In
  principle the present theory also applies to Anosov diffeomorphisms if one
  can devise the proper functional space setting. Unfortunately, despite
  recent progress \cite{DL, BG}, the available settings are still not adequate
  for the applications of the present results. Nevertheless, it is conceivable
  that in the near future this result could be applied e.g. to billiards.

A related but different question occurs if a system has two ergodic mixing
components that share a common part of their boundary in phase space. In that
case small perturbations can cause rare ``jumps'' over the boundary giving
thus rise to quasistationary (also called metastable or nearly invariant) behavior. As
a result the double eigenvalue $1$ corresponding to the two original mixing
components splits into a single eigenvalue $1$ and another real eigenvalue
$\lambda_\ve$ close to $1$ which characterizes the rate of exchange between
the two components, see e.g. \cite{dellnitz-junge-1999,{MSchF-2005}}. In
subsection~\ref{subsec:exchange} we consider piecewise expanding 1D-maps $T$
on the interval $[0,1]$ with two mixing components $I_1,I_2$ having a fixed
point $z$ in common. The Markov process obtained by adding to the dynamics, at
each time $n$, independent identically distributed random noise $\ve Z_n$
shows quasistationary behavior with
\begin{equation}
  \label{eq:final-exchange-intro}
  \lim_{\ve\to 0}\frac{1-\lambda_\ve}{\ve}
  =
  \frac{\beta+\alpha}2\left(1-\frac1{T'(z)}\right)\bE[|W|]
  +\frac{\beta-\alpha}2 \bE[Z_1]
\end{equation}
where $\alpha=(2m(I_1))^{-1}$, $\beta=(2m(I_2))^{-1}$
and $W:=\sum_{n=0}^\infty [T'(z)]^{-n} Z_{n+1}$.

The paper is organized as follows: in the next section we describe the general
setting and state our main Theorem~\ref{thm:main} whose proof is postponed to
section~\ref{sec:proof-main}. In section~\ref{sec:applications-1D} we apply
the theorem to decay and exchange rates of piecewise expanding interval maps
and illustrate its applicability with some specific examples: the doubling
map, the Gauss map and the generalized cusp map. The decay rate of a
two-dimensional example is studied in section~\ref{sec:coupled}. More
precisely, we study the rate at which trajectories of two coupled 1D-maps
synchronise up to some difference $\ve$ in the limit $\ve\to0$. Finally, in
section~\ref{sec:comments}, we indicate relations among between formula and
approaches to metastability in molecular dynamics \cite{MDHSch-2006}, in
oceanic structures \cite{froyland-et-al}, and with the Shannon capacity of
constrained binary codes \cite{kashyap-2003}.

\section{An abstract perturbation result}
\label{sec:abstract}
Let $(V,\|.\|)$ be a real or complex normed vector space with dual $(V',\|.\|)$ .
Consider a family $P_\ve:V\to V$ $(\ve\in E)$ of uniformly bounded linear operators where
$E\subseteq\bR$ is a closed set of parameters with $\ve=0$ as an accumulation
point. We make the following assumptions on the operators $P_\ve$:
there are $\lambda_\ve\in\bC$, $\vf_\ve\in V$, $\nu_\ve\in V'$ and linear
operators $Q_\ve:V\to V$ such that
\begin{gather}
  \tag{A1}\lambda_\ve^{-1}P_\ve=\vf_\ve\otimes\nu_\ve+Q_\ve,\label{gather:P}\\
  \tag{A2}P_\ve(\vf_\ve)=\lambda_\ve\vf_\ve,\;
  \nu_\ve P_\ve=\lambda_\ve\nu_\ve,\;Q_\ve(\vf_\ve)=0,\;\nu_\ve Q_\ve=0,\;
  \label{gather:lambda}\\
  \tag{A3}  
  \sum_{n=0}^\infty\sup_{\ve\in E}\|Q_\ve^n\|=:C_1<\infty,\label{gather:Q}
\end{gather}
(The summability condition in \eqref{gather:Q} can only be
satisfied if the operators $P_\ve$ have a uniform spectral gap. See
Remark~\ref{rem:summability} for a weakening of this requirement.)
Observe that assumptions \eqref{gather:P} and
\eqref{gather:lambda} imply $\nu_\ve(\vf_\ve)=1$ for all $\ve$.
As our ultimate goal is to prove a perturbation result for small $\ve$, it is
natural to relate the ``size'' of $\vf_\ve$ to that of $\vf_0$ by a further assumption:
\begin{gather}
  \tag{A4}\nu_0(\vf_\ve)=1\quad\text{and}\quad
  \sup_{\ve\in E}\|\vf_\ve\|=:C_2<\infty.\label{gather:phi}
\end{gather}
{}Finally we denote
\begin{equation}
  \Delta_\ve:=\nu_0((P_0-P_\ve)(\vf_0))
\end{equation}
and we make the following assumptions to control the size of the perturbation:
there is $C_3>0$ such that
\begin{gather}
  \tag{A5}\eta_\ve:=\|\nu_0(P_0-P_\ve)\|\to0\text{ as }\ve\to0,\label{gather:small1}\\
  \tag{A6}\eta_\ve\cdot\|(P_0-P_\ve)(\vf_0)\|\leq C_3|\Delta_\ve|.\label{gather:small2}
\end{gather}
Here $\eta_\ve$ denotes the norm of the linear functional
$\nu_0(P_0-P_\ve):V\to\bR$.  

The basic identity is
\begin{equation}
  \label{eq:basic1}
  \lambda_0-\lambda_\ve
  =
  \lambda_0\nu_0(\vf_\ve)-\nu_0(\lambda_\ve(\vf_\ve))
  =
  \nu_0((P_0-P_\ve)(\vf_\ve)).
\end{equation}
In view of assumptions \eqref{gather:phi} and \eqref{gather:small1} this
implies
\begin{equation}
  \label{eq:Lipschitz-at-0}
  \left|\lambda_0-\lambda_\ve\right|
  \leq
  C_2\eta_\ve,
\end{equation}
in particular, $\lim_{\ve\to0}\lambda_\ve=\lambda_0$.
The main result of this section is the following more accurate approximation for
$\lambda_0-\lambda_\ve$.
\begin{thm} \label{thm:main}
  Assume \eqref{gather:P}--\eqref{gather:small2}.
  \begin{enumerate}[a)]
  \item There is $\ve_0>0$ such that $\lambda_\ve=\lambda_0$ if $\ve\leq\ve_0$
    and $\Delta_\ve=0$.
  \item If $\Delta_\ve\neq0$ for all sufficiently small $\ve\in E$ and if
  \begin{equation}\label{eq:main-assumption}\tag{A7}
    q_k:=\lim_{\ve\to0}q_{k,\ve}:=\lim_{\ve\to0}\frac{\nu_0\left((P_0-P_\ve)P_\ve^{k}(P_0-P_\ve)(\vf_0)\right)}{\Delta_\ve}
  \end{equation}
  exists for each integer $k\geq0$, then 
  \begin{equation}
    \label{eq:main}
    \lim_{\ve\to0}\frac{\lambda_0-\lambda_\ve}{\Delta_\ve}
    =
    1-\sum_{k=0}^\infty\lambda_0^{-(k+1)} q_{k}\,.
  \end{equation}
  \end{enumerate}
\end{thm}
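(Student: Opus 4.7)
The plan is to reduce both statements to a single \emph{master identity} obtained by working on the dual side. Using \eqref{gather:lambda}, \eqref{gather:phi}, the decomposition $Q_\ve = \lambda_\ve^{-1}P_\ve - \vf_\ve\otimes\nu_\ve$ from \eqref{gather:P}, and the crucial cancellation $(\nu_\ve - \nu_0)(\vf_\ve) = 0$, a direct computation yields
\begin{equation*}
  (\nu_\ve - \nu_0)(I - Q_\ve)
  = \lambda_\ve^{-1}\bigl[(\lambda_0 - \lambda_\ve)\nu_0 - \nu_0(P_0 - P_\ve)\bigr].
\end{equation*}
Since $(I - Q_\ve)^{-1}$ exists and is uniformly bounded by \eqref{gather:Q}, inverting and evaluating both sides on the vector $(P_0 - P_\ve)\vf_0$ (using $\nu_\ve((P_0 - P_\ve)\vf_0) = (\lambda_0 - \lambda_\ve)\nu_\ve(\vf_0)$) produces the master identity
\begin{equation*}
  (\lambda_0 - \lambda_\ve)\bigl[\nu_\ve(\vf_0) - \lambda_\ve^{-1} M_\ve\bigr]
  = \Delta_\ve - \lambda_\ve^{-1} R_\ve,
\end{equation*}
with $M_\ve := \nu_0(I - Q_\ve)^{-1}(P_0 - P_\ve)\vf_0$ and $R_\ve := \nu_0\bigl((P_0 - P_\ve)(I - Q_\ve)^{-1}(P_0 - P_\ve)\vf_0\bigr)$.

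Next, I would establish $\nu_\ve(\vf_0) \to 1$ and $M_\ve \to 0$. The first follows by applying the $(\nu_\ve - \nu_0)$-formula to $y = \vf_0$ and invoking \eqref{gather:small1} together with \eqref{eq:Lipschitz-at-0}. For the second, reapplying the same formula to $y = (I - Q_\ve)^{-1}(P_0 - P_\ve)\vf_0$ gives
\begin{equation*}
  M_\ve = (\lambda_0 - \lambda_\ve)\nu_\ve(\vf_0) - \lambda_\ve^{-1}(\lambda_0 - \lambda_\ve) M_\ve' + \lambda_\ve^{-1} R_\ve',
\end{equation*}
in which $M_\ve'$ is bounded (since $|\Delta_\ve|/\eta_\ve \le \|\vf_0\|$ combined with \eqref{gather:small2} forces $\|(P_0 - P_\ve)\vf_0\| \le C_3\|\vf_0\|$) and $R_\ve' = O(|\Delta_\ve|)$ again by \eqref{gather:small2}. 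This step is the main obstacle: the naive norm estimate $|M_\ve| \le \|\nu_0\|C_1\|(P_0 - P_\ve)\vf_0\|$ need not tend to zero, because \eqref{gather:small1} and \eqref{gather:small2} only say that $(P_0 - P_\ve)\vf_0$ is small when tested against $\nu_0$; the resolution is precisely to reuse the dual identity, which extracts the vanishing factor $(\lambda_0 - \lambda_\ve)$ from the dangerous contribution.

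Part~(a) is then immediate: if $\Delta_\ve = 0$, \eqref{gather:small2} forces $\eta_\ve = 0$ or $(P_0 - P_\ve)\vf_0 = 0$, and either case makes $R_\ve = 0$; the master identity reduces to $(\lambda_0 - \lambda_\ve)\cdot\bigl(\nu_\ve(\vf_0) - \lambda_\ve^{-1}M_\ve\bigr) = 0$ with bracket tending to $1$, so $\lambda_\ve = \lambda_0$ for $\ve \le \ve_0$. For part~(b), divide the master identity by $\Delta_\ve$ and expand $(I - Q_\ve)^{-1} = \sum_k Q_\ve^k$; using $Q_\ve^k = \lambda_\ve^{-k}P_\ve^k - \vf_\ve\otimes\nu_\ve$ for $k \ge 1$, the $k$-th summand of $R_\ve/\Delta_\ve$ equals $\lambda_\ve^{-k}q_{k,\ve} - (\lambda_0 - \lambda_\ve)^2\nu_\ve(\vf_0)/\Delta_\ve$ for $k\ge 1$, and $q_{0,\ve}$ for $k=0$. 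Each summand is bounded by $C_3\sup_\ve\|Q_\ve^k\|$ via \eqref{gather:small2}, which is summable by \eqref{gather:Q}; the master identity and the preceding estimates keep $|(\lambda_0 - \lambda_\ve)/\Delta_\ve|$ bounded, so the quadratic piece vanishes in the limit. Dominated convergence and \eqref{eq:main-assumption} then give $R_\ve/\Delta_\ve \to \sum_{k \ge 0}\lambda_0^{-k}q_k$, and substitution into the master identity produces \eqref{eq:main}.
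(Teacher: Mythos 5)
Your proof is correct, but it takes a genuinely different route from the paper's. You work on the dual side: the identity $(\nu_\ve-\nu_0)(\Id-Q_\ve)=\lambda_\ve^{-1}\bigl[(\lambda_0-\lambda_\ve)\nu_0-\nu_0(P_0-P_\ve)\bigr]$ does follow from $\nu_\ve Q_\ve=0$, $\nu_0(\vf_\ve)=1$ and $\nu_0P_0=\lambda_0\nu_0$; you then invert $\Id-Q_\ve$ by the full Neumann series and test against $(P_0-P_\ve)\vf_0$ to get an exact closed-form identity, after which $R_\ve/\Delta_\ve$ is handled by dominated convergence in $k$. The paper instead stays on the vector side: it uses $\nu_\ve(\vf_0)\vf_\ve=(\lambda_\ve^{-1}P_\ve)^n\vf_0-Q_\ve^n\vf_0$, telescopes the \emph{finite} sum $\Id-(\lambda_\ve^{-1}P_\ve)^n$, controls the tail via $\kappa_n$ and Lemma~\ref{lem:two-estimates}, and takes $\ve\to0$ before $n\to\infty$. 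Both routes need the same auxiliary facts ($\nu_\ve(\vf_0)\to1$, boundedness of $(\lambda_0-\lambda_\ve)/\Delta_\ve$, and smallness of the $Q_\ve$-part of $(P_0-P_\ve)\vf_0$ when tested against $\nu_0(P_0-P_\ve)$), and you correctly isolate and resolve the one delicate point, $M_\ve\to0$, by reapplying your dual identity to extract the vanishing factor $\lambda_0-\lambda_\ve$ from the term that a naive norm bound cannot kill. What your version buys is a cleaner exact identity and a single limit instead of an iterated one; what it costs is generality: inverting $\Id-Q_\ve$ as a bounded operator on $(V,\|\cdot\|)$ uses the full strength of \eqref{gather:Q}, whereas the paper's finite-truncation bookkeeping is precisely what permits the two-norm weakening \eqref{gather:Q-weak} of Remark~\ref{rem:summability}. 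For the theorem as stated your argument is complete; in a write-up you should only add a word on the degenerate case $\eta_\ve=0$ in the bound $\|(P_0-P_\ve)\vf_0\|\le C_3\|\vf_0\|$ (harmless, since then $\lambda_\ve=\lambda_0$ directly) and note that $\lambda_0\neq0$ keeps $\lambda_\ve^{-1}$ uniformly bounded for small $\ve$.
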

\begin{rem}
  \label{rem:summability}
  In section~\ref{sec:proof-main} we prove this theorem under slightly weaker
  hypothesis that may be applicable also in non-uniformly hyperbolic
  situations. Namely, we relax the summability condition 
  $\sum_{n=0}^\infty\sup_{\ve\in E}\|Q_\ve^n\|<\infty$ from \eqref{gather:Q}
  in the following way: there is a second norm $\|.\|_*\geq\|.\|$ on $V$ such
  that
  \begin{equation}
    \label{gather:Q-weak}
    \tag{A3$^*$}
    \sum_{n=0}^\infty\sup_{\ve\in E}\|Q_\ve^n\|^*=:C_1<\infty 
  \end{equation}
  where $\|Q_\ve^n\|^*:=\sup\{\|Q_\ve^n\psi\|:\|\psi\|_*\leq1\}$. We have to
  compensate this by slightly stronger assumptions on $\vf_0$, namely
  $\|\vf_0\|_*\leq C_2<\infty$ and
  \begin{equation}
    \tag{A6$^*$}
    \eta_\ve\cdot\|(P_0-P_\ve)(\vf_0)\|_*\leq C_3|\Delta_\ve|. 
    \label{gather:small2-strong}
  \end{equation}
\end{rem}

\section{Applications to piecewise expanding interval maps}
\label{sec:applications-1D}
Assume that $T:[0,1]\to[0,1]$ is piecewise monotone
with (possibly countably many) continuously differentiable branches. (This
means that each branch is continuously differentiable in the interior of its
domain so that the derivative even of a single branch may be unbounded.) Define
$g:\bR\to\bR$ by $g(x)=1/|T'(x)|$ if $x$ is in the interior of one of the
monotonicity intervals of $T$ and $g(x)=0$ otherwise, and assume that
$\|g\|_\infty<1$ and that $g$ is
of bounded variation. 
Let $\BV$ be the space of real-valued functions of bounded
variation on $[0,1]$. 
Rychlik \cite{rychlik-1983} showed that the
Perron-Frobenius operator $P$ of $T$ acting on Lebesgue equivalence classes of
functions from $\BV$ is quasi-compact.
(As $\BV$-functions have at most countably many discontinuities, two
$\BV$-functions in the same Lebesgue
equivalence class have the same discontinuities and differ at most by their
values at these countably many points, and we will not distinguish henceforth
between $\BV$-functions and their Lebesgue equivalence classes.)

If $T$ is mixing this implies that $P_0=P$ satisfies
\eqref{gather:P}--\eqref{gather:Q} for $\ve=0$ with $\nu_0=m(=\text{Lebesgue
  measure})$, $\lambda_0=1$ and $0\leq\vf_0\in\BV$.

The essential observation behind this is that for $\ve=0$ a Lasota-Yorke type
inequality \cite{lasota-yorke-1973} is satisfied: there are constants
$r\in(0,1)$ and $R>0$ such that for 
$\ve=0$, all $n\in\bN$ and all $f\in\BV$,  
\begin{equation}
  \label{eq:LY}
  \|P_\ve^n f\|\leq R\,(r^n\|f\|+\int |f|\,dm)
\end{equation}
where $\|f\|$ is the variation of the extension of $f$ to the whole real line
by setting $f(x)=0$ if $x\not\in[0,1]$.  For the applications in this section
we will assume that this inequality holds not only for $\ve=0$ but, with
uniform constants $r$ and $R$, for all $\ve\in E$. This is mostly the case
when $P_\ve$ is a small dynamical perturbation of $P_0$ -- however there are
exceptions, see \cite{keller-liverani-1999} for a more precise discussion and
references.

\subsection{Decay rates}
\label{subsec:decay}
We suppose that $T$ is mixing. Let $(V,\|.\|)$ be the space $\BV$, let
$E=[0,\ve_1]$, and consider a family $(I_\ve)_{\ve\in E}$ of compact
subintervals of $[0,1]$ such that $I_\ve\subseteq I_{\ve'}$ if
$\ve\leq\ve'$. Define the operators $P_\ve$ by $P_\ve(f)=P(f1_{[0,1]\setminus
  I_\ve})$. If $m(I_{\ve_1})$ is sufficiently small, the perturbation results
from \cite{keller-liverani-1999} apply provided \eqref{eq:LY} holds, see
\cite[section 7]{liverani-maume-2003}. In particular,
\eqref{gather:P}--\eqref{gather:phi} are satisfied for $\ve\in E$. We have
$\Delta_\ve=\nu_{0}(P(1_{I_\ve\setminus I_{0}}\vf_{0}))=\mu_0(I_\ve\setminus
I_{0})$ where $\mu_0$ is the probability measure with density $\vf_0$
w.r.t. $\nu_0$. ($\mu_0$ is indeed the equilibrium state for $\log g$ on the
``non-trapped'' set $X_{nt}:=\{x\in[0,1]:T^nx\not\in
\overset{\circ}{I}_0\,\forall n\geq0\}$. $\vf_\ve$ is also the conditionally
invariant density for the ``hole'' $I_\ve$, see
e.g. \cite{liverani-maume-2003}.)

We need to check assumptions \eqref{gather:small1} and \eqref{gather:small2}.
First note that
\begin{equation}
  \eta_\ve
  =
  \sup_{\|\psi\|\leq1}|\nu_0(P_0(\psi 1_{I_\ve\setminus I_{0}}))|
  =
  |\lambda_0|\sup_{\|\psi\|\leq1}\left|\int_{I_\ve\setminus
      I_{0}}\psi\,d\nu_0\right|
  \leq 
  |\lambda_0|\,\nu_0(I_\ve\setminus I_{0})\, .
\end{equation}
In particular, $|\nu_0(P_0-P_\ve)(\vf_0)|=|\lambda_0|\int_{I_\ve\setminus
  I_{0}}\vf_0\,d\nu_0$.  As
$\|(P_0-P_\ve)(\vf_0)\|\leq\cO(\|\vf_01_{I_\ve\setminus I_{0}}\|)$, assumptions
\eqref{gather:small1} and \eqref{gather:small2} will be satisfied if
$\nu_0(I_\ve\setminus I_{0})\to0$ when $\ve\to0$ and if
\begin{equation}\label{eq:alternative-small}
  \|\vf_01_{I_\ve\setminus I_{0}}\|
  \leq
  \text{const}\,\frac1{\nu_0(I_\ve\setminus I_{0})}\int_{I_\ve\setminus I_{0}}\vf_0\,d\nu_0\,.
\end{equation}
This condition (as well as conditions \eqref{gather:P}--\eqref{gather:phi}
discussed above) can be checked easily in most cases of interest. It is always
satisfied if $\inf\vf_0|_{I_{\ve_1}}>0$. 

\subsubsection{Holes $I_\ve$ shrinking to a point}

We specialism to the case where $I_0=\{z\}$ for some $z\in[0,1]$ so that $P_0$
is indeed the Perron-Frobenius operator $P$ for $T$ and $\lambda_0=1$, and we
assume for simplicity that $T$ and also the invariant density $\vf_0$ are
continuous at $z$. We consider $I_\ve$ with length $\ve$, so $m(I_\ve\setminus
I_0)=\ve$, and we assume that $\Delta_\ve>0$.  Here are a few examples:
\begin{description}
\item[The doubling map] $T(x)=2x\text{ mod\,}1$ with $\vf_0(x)=1$.
\item[The Gauss map] $T(x)=\frac1x\text{ mod\,}1$
 with $\vf_0(x)=\frac1{\ln 2}\frac1{1+x}$
\item[The generalized cusp map] $T_\gamma(x)=1-|2x-1|^\gamma$ for some
  $\gamma\in(\frac12,1]$. As
  $|T_\gamma'(x)|=\frac{2\gamma}{|2x-1|^{1-\gamma}}\geq2\gamma$, this map is a
  uniformly expanding map with two full branches. The weight function
  $g(x)=|T_\gamma'(x)|^{-1}$ has two monotone bounded branches and is clearly
  of bounded variation. (Observe that $T_1$ is just the tent map. $T_{1/2}$ is
  known as the cusp map.  It has $x=0$ as a neutral fixed point and is not
  covered by the present setting.)  The invariant density $\vf_0(x)$ of
  $T_\gamma$ behaves like $\frac{\vf_0(1/2)}\gamma (1-x)^{\frac1\gamma-1}$
  near $x=1$, so it has a zero at $x=1$ if $\gamma<1$.\footnote{Here is a
    sketch of the argument: As the map $T_\gamma$ has full branches, the
    invariant density $\vf_0=\lim_{n\to\infty}P_0^n1$ is continuous. Also
    $\vf_0(\frac12)>0$, because otherwise $\vf_0(x)=0$ for all
    $x\in\bigcup_nT_\gamma^{-n}\{\frac12\}$, and this set is dense in
    $[0,1]$. Therefore, for $x$ close to $1$,
    \begin{displaymath}
      \vf_0(x)=P_0\vf_0(x)
      \sim\frac{\vf_0(1/2)}\gamma(1-x)^{\frac1\gamma-1}
    \end{displaymath}
}
\end{description}
In all three examples, $\Delta_\ve=\mu_0(I_\ve)>0$. In the first two examples,
condition \eqref{eq:alternative-small} is clearly satisfied because
$\inf\vf_0|_{I_\ve}>0$ if $\ve$ is sufficiently small. For the generalized cusp
map the same is true if $z\neq1$. In case $z=1$,
$\|\vf_01_{I_\ve}\|=2\,\text{const}_\gamma\,\ve^{\frac1\gamma-1}$ and
$\int_{I_\ve}\vf_0\,dm=\text{const}_\gamma\int_{1-\ve}^1(1-x)^{\frac1\gamma-1}\,dx=\text{const}_\gamma\,\gamma\,\ve^{\frac1\gamma}$
so that \eqref{eq:alternative-small} is satisfied as well.

Let
\begin{displaymath}
  U_{k,\ve}:=T^{-1}([0,1]\setminus I_\ve)\cap\dots\cap
  T^{-k}([0,1]\setminus I_\ve)\cap T^{-(k+1)}I_\ve.
\end{displaymath}
As $q_{k,\ve}=\mu_0(I_\ve\cap U_{k,\ve})/\mu_0(I_\ve)$, we find:
\begin{description}
\item[If $z$ is not periodic] then $U_{k,\ve}=\emptyset$ for sufficiently
  small $\ve$ so that $q_k=\lim_{\ve\to0}q_{k,\ve}=0$ for all $k$. Therefore,
  \begin{equation}\label{eq:limit-not-periodic}
    \lim_{\ve\to0}\frac{1-\lambda_\ve}{\mu_0(I_\ve)}=1,\text{ in
      particular }
    \lim_{\ve\to0}\frac{1-\lambda_\ve}{m(I_\ve)}=\vf_0(z)\,.
  \end{equation}
\item[If $z$ is periodic with period $p$] then $U_{k,\ve}=\emptyset$ for
  sufficiently small $\ve$ except if $k=p-1$ so that
  \begin{gather}
    \lim_{\ve\to0}\frac{1-\lambda_\ve}{\mu_0(I_\ve)}=1-\lim_{\ve\to0}\frac{\mu_0(I_\ve\cap
        T^{-p}I_\ve)}{\mu_0(I_\ve)}
    =1-\frac1{|(T^p)'(z)|}\,,\\
    \text{in particular }
    \lim_{\ve\to0}\frac{1-\lambda_\ve}{m(I_\ve)}
    =
    \vf_0(z)\left(1-\frac1{|(T^p)'(z)|}\right)\,.
    \label{eq:limit-periodic}
  \end{gather}
\end{description}
Formulas \eqref{eq:limit-not-periodic} and
\eqref{eq:limit-periodic} imply that the function $\ve\mapsto\lambda_\ve$ is
differentiable at $\ve=0$. Note, however, that in general it is
non-differentiable at other values of $\ve$, see section
\ref{subsubsec:big-hole} below.

We look more explicitly at the above three examples. Recall that $I_0=\{z\}$.
\begin{description}
\item[The doubling map]
  $\lim_{\ve\to0}\frac{1-\lambda_\ve}{m(I_\ve)}=1-2^{-p}$ if
  $T^p(z)=z$. This is \eqref{eq:doubling}.
\item[The Gauss map] a) Consider $z=0$ and $\epsilon\in
  E:=\{\frac12,\frac13,\frac14,\dots\}\cup\{0\}$. Let $I_\ve:=[0,\ve]$. In
  terms of the continued fraction algorithm this means that the expansion
  stops as soon as a digit $\geq\ve^{-1}$ is generated. In this case it is
  easy to see that
  \begin{displaymath}
    \begin{split}
      \mu_0(I_\ve\cap U_{k,\ve})
      &\leq
      \frac1{\ln2}m(I_\ve\cap T^{-(k+1)}I_\ve)
      =
      \frac1{\ln2}\int_{I_\ve}P_0^k(P_01_{I_\ve})\,dm\\
      &=
      \cO(\ve\,\mu_0(I_\ve))      
    \end{split}
  \end{displaymath}
  so that $q_{k,\ve}=\cO(\ve)$ and hence $q_k=0$ for all $k$. Hence
  $\lim_{\ve\to0}\frac{1-\lambda_\ve}{\ve}=\frac1{\ln2}$.\\
  b) For the same map we consider $z=\frac{\sqrt5-1}2$ which is the rightmost
  fixed point of $T$. We have $T'(z)=-z^{-2}$. As $-z$ and $z^{-1}$ are the
  two zeros of $x^2-x-1$, it is obvious that $1-\frac1{|T'(z)|}=z$. Hence, for
  intervals $I_\ve$ of length $\ve$ around $z$ we have
  $\lim_{\ve\to0}\frac{1-\lambda_\ve}\ve=\frac1{\ln2}\frac{z}{1+z}=\frac{z^2}{\ln2}$.\\
  c) Denote $f(x)=\frac1x-1$. Then $f$ is the rightmost branch of $T$, and the
  interval around $z$ which is mapped by $T^k$ onto $(0,1)$ has endpoints
  $f^{-k}(1)$ and $f^{-k}(0)$. Denote the length of this interval by $\ve_k$
  and the interval itself by $I_{\ve_k}$. As $-z$ and $z^{-1}$ are the
  eigenvalues of the the coefficient matrix of $f^{-1}$, a calculation shows
  that $\ve_k=z^{2k+1}(1+z^2)^2(1+\cO(z^{2k+2}))$.\footnote{Denote the coefficient matrix ${0\;1\choose 1\;1}$ of
    $f^{-1}$ by $M$ and let $M^k=:{a_k\;b_k\choose c_k\;d_k}$. Then
    $f^{-k}(x_1)-f^{-k}(x_0)=\det(M^k)\frac{x_1-x_0}{(c_kx_1+d_k)(c_kx_0+d_k)}$
    so that $\ve_k=|f^{-k}(1)-f^{-k}(0)|=\frac1{|(c_k+d_k)d_k|}$. As
    $c_{k+1}=d_k$ and $d_{k+1}=(c_k+d_k)$, we have
    $\ve_k=\frac1{|d_{k+1}d_k|}$ and $d_{k+1}=d_k+d_{k-1}$. With $d_0=d_1=1$
    this yields $d_k=\frac1{1+z^{-2}}(z^{-(k+2)}+(-z)^k)$. Hence
    \begin{displaymath}
      \ve_k
      =
      \frac{z^{2k}(1+z^{-2})^2}{(z^{-3}+(-1)^{k+1}z^{2k+1})(z^{-2}+(-1)^kz^{2k})}
      =
      z^{2k+1}(1+z^2)^2(1+\cO(z^{2k+2}))
    \end{displaymath}
}
In terms of the continued fraction algorithm the hole $I_{\ve_k}$ means that an expansion stops at time $n+k$ as soon as at least $k$ consecutive digits $1$ are generated.  So it is natural to rewrite the limit from b) as in formula \eqref{eq:gauss}, namely
  \begin{equation}
    \label{eq:gauss-fix}
    \lim_{k\to\infty}\frac{1-\lambda_{\ve_k}}{z^{2k}}
    =
    \frac{z^3(1+z^2)^2}{\ln2}\approx0.6504
  \end{equation}

\item[The generalized cusp map] We focus on $z=1$, where the invariant
  density $\vf_0$ vanishes, and consider holes $I_\ve=[1-\ve,1]$. Then, for
  $\ve$ close to $0$, we have
  $\mu_0(I_\ve)\sim\frac{\vf_0(1/2)}\gamma\int_{1-\ve}^1(1-x)^{\frac1\gamma-1}\,dx=\vf_0(\frac12)\,\ve^{\frac1\gamma}$
  so that
  $\lim_{\ve\to0}\frac{1-\lambda_\ve}{\ve^{1/\gamma}}=\vf_0(\frac12)$. 
\end{description}

\subsubsection{Holes shrinking to a nontrivial hole}
\label{subsubsec:big-hole}

We assume now that $I_0$ is a nontrivial interval of some fixed length
$\ell>0$ and the intervals $I_\ve\supseteq I_0$ have length $\ell+\ve$. To
simplify the discussion we assume more specifically that
$I_\ve=[a-\ve,a+\ell]$ where $a\in[0,1]$ is a continuity point of $T$ and also
of $\vf_0$. Assume furthermore that $a$ is not periodic for $T$ (the periodic
case can be dealt with analogously). Now, as $\mu_0$ is supported by the
non-trapped set $X_{nt}$, we have in particular $\mu_0(I_0)=0$ and hence
$\Delta_\ve=\mu_0(I_\ve)$. It follows from Theorem~\ref{thm:main} that either
$\mu_0(I_\ve)=0$ and hence $\lambda_\ve=\lambda_0$ for all sufficiently small
$\ve$, or $\lim_{\ve\to0}\frac{\lambda_0-\lambda_\ve}{\mu_0(I_\ve)}=1$. But
observe that $\mu_0$ is of fractal nature, so typically $\mu_0(I_\ve)$ depends
on $\ve$ in a devil's staircase manner. Hence either $\lambda_\ve=\lambda_0$
for small $\ve$ (which happens if $a$ itself is trapped), or
$\lim_{\ve\to0}\frac{\ve}{\mu_0(I_\ve)}=0$ and $\ve\mapsto\lambda_\ve$ is not
differentiable at $\ve=0$.


\subsection{Exchange rates}
\label{subsec:exchange}
We suppose that $T$ has two ergodic components and that its restriction to  each of these components is mixing. 
So the eigenvalue $1$ of $P$ has (geometric) multiplicity $2$ and the rest of its spectrum is contained in a
disk of radius smaller than some $\gamma\in(0,1)$.  Let $(\Pi_\ve)_{\ve\in E}$
be a family of Markov operators close to the identity with $\Pi_0=\Id$, and
denote $P_\ve:=P\circ\Pi_\ve$. (One could as well consider $\Pi_\ve\circ P$ since
that operator has the same eigenvalues as $P_\ve$.) Under rather weak regularity assumptions on the
$\Pi_\ve$, the spectral perturbation results from \cite{keller-liverani-1999}
apply again. This is true, for example, if the $\Pi_\ve$ are convolutions with
smooth densities $k_\ve(x)=\ve^{-1}k(\ve^{-1}x)$ (modeling random
perturbations) or if they are conditional expectations w.r.t. $m$ and a finite
partition into intervals of length $\ve$ (modeling Ulam's discretization
scheme.) As the $P_\ve$ are also Markov operators, this means that $1$ is
an isolated eigenvalue of each $P_\ve$. If it has multiplicity $2$ there is
nothing more to say about it. If it is a simple eigenvalue, however, then
there is a second simple eigenvalue $\lambda_\ve$ close to $1$ to which we
will apply Theorem~\ref{thm:main}.

Let $(V,\|.\|)$ be the
space
\begin{displaymath}
  \BV_0:=\{f\in\BV: m(f)=0\}.
\end{displaymath}
$\BV_0$ is invariant under all $P_\ve$, and the previous discussion implies
that assumptions \eqref{gather:P}--\eqref{gather:phi} are satisfied. More
precisely, $\lambda_0=1$, and there is an increasing function $\chi:[0,1]\to\{-1,1\}$ such that
$\chi\circ T=\chi$, $|\vf_0|=\chi\vf_0$ is an invariant density for $P=P_0$,
and $\nu_0=\chi m$, so that $\mu_0:=\vf_0\nu_0=|\vf_0|m$ is an invariant
probability measure that gives equal mass to both ergodic components of $T$.
Let $\tilde I_1=\{\chi=-1\}$ and $\tilde I_2=\{\chi=1\}$ be the two invariant components of $T$.

In order to apply Theorem \ref{thm:main} let
\begin{displaymath}
  p_\ve(x):=\frac12\left(1-\left(\chi(x)\cdot(\Pi_\ve^*\chi)(x)\right)\right)
\end{displaymath}
where $\Pi_\ve^*$ is the dual of $\Pi_\ve$ with respect to Lebesgue measure on
$[0,1]$: It is easy to see that $p_\ve(x)$ is the probability, that the
Markovian dynamics $\Pi_\ve$ move the system from the state $x$ (that belongs
to one of the two invariant components of $T$) to some state in the other
component.

We have to check assumptions \eqref{gather:small1} and
\eqref{gather:small2}. Routine calculations show that 
$\nu_0(P_0-P_\ve)(\psi)=2m(\chi\cdot p_\ve\cdot \psi)$ 
for each $\psi\in
V$. This implies
\begin{equation}
  \label{eq:Delta-exchange}
  \Delta_\ve=\nu_0(P_0-P_\ve)(\vf_0)=2m(|\vf_0|\cdot p_\ve)=2\mu_0(p_\ve)
\end{equation}
and 
$\nu_0(P_0-P_\ve)(\psi)=2m(\chi p_\ve\psi)\leq 2\|\psi\|\int_0^1|p_\ve|dm$ 
so that $\eta_\ve\leq2m(p_\ve)$. Therefore we require that the average
probability $m(p_\ve)$ to change the invariant component under the action of
$\Pi_\ve$ tends to $0$ as $\ve\to0$ and that
\begin{equation}\label{eq:condA6}
  \|(\Id-\Pi_\ve)(\vf_0)\|
  \leq
  \text{const}\cdot\frac1{m(p_\ve)}\int|\vf_0|p_\ve\,dm\ .
\end{equation}
In the following we will assume
\[
\inf|\vf_0|>0.
\]
This trivially implies \eqref{eq:condA6} although the latter can be verified in many other cases. 
It remains to check assumption \eqref{eq:main-assumption}. Observing
\eqref{eq:Delta-exchange} and
\begin{equation}
  \begin{split}
    \nu_0\left((P_0-P_\ve)P_\ve^{k}(P_0-P_\ve)(\vf_0)\right)
    &=
    m((\chi-\Pi_\ve^*\chi)\cdot P_\ve^k(P_0-P_\ve)(\vf_0))\\
    &=
    2m\left(p_\ve\,\chi\cdot P_\ve^kP_0(\vf_0-\Pi_\ve\vf_0)\right)
  \end{split}
\end{equation}
we get the following expression for the $q_{k,\ve}$:
\begin{equation}
  q_{k,\ve}=
  \frac1{m(|\vf_0|\,p_\ve)}\,m\left(p_\ve\,\chi\cdot P_\ve^kP_0(\vf_0-\Pi_\ve\vf_0)\right)
\end{equation}

The evaluation of the limit as $\ve\to0$ depends strongly on the
details of the map $T$ and of the perturbation. 
We therefore make some further simplifying assumptions:
\begin{itemize}
\item The $\Pi_\ve$ are local perturbations, i.e., for each $x$,
  $\Pi_\ve\delta_x$ is supported in a $C\ve$-neighborhood of $x$.
\item $T$ is continuous.
\end{itemize}
As the restrictions of $T$ to its two ergodic components are mixing, the
continuity of $T$ implies that the non-wandering part of these components are
just two single intervals $I_i\subset \tilde I_i$.  If these intervals do not have a common end point, then
$\Delta_\ve=2\mu_0(p_\ve)=0$ for small $\ve$ so that $\lambda_\ve=1$ for such
$\ve$ by our main theorem.  Otherwise $I_1$ and $I_2$ have a common endpoint
$z$.  Since two interval can have at most one common endpoint and since the
map is continuous, it follows by the invariance of $I_1,I_2$ that $z$ is a
fixed point. In this case, $p_\ve(x)=0$ unless $x$ belongs to the
$C\ve$-neighborhood of $z$. As an example let us consider the special (but
still rather general) class of examples characterized by the following
properties
\begin{itemize}
\item $\overline{I_1\cup I_2}=[0,1]$.\footnote{If the wandering part is
    present, the final result still holds with $I_i$ substituted by
    $\cup_{n\in\bN}T^{n}I_i$ in \eqref{eq:alpha-beta}.}
\item Assume $\Pi_\ve f(x)=\int_{0}^1K_\ve (y,x)f(y)dy$ where $K_\ve$ is a
  positive kernel such that, for all $y\in[0,1]$, $\int_0^1K_\ve(y,x)dx=1$.
    In order to satisfy assumptions \eqref{gather:P}--\eqref{gather:phi} one
    should suppose that the kernels are bistochastic or close to convolution
    kernels in the sense of \cite[Corollary 3.20]{blank-keller-1997}.
  \item There exists $a>0$ such that $K_\ve(y,x)=\ve^{-1} K(\ve^{-1}(x-y))$
    provided $|z-y|\leq a$.  Here $K$ is a smooth probability density
    supported in $[-1,1]$.\footnote{One can consider the more general case
      $K_\ve(y,x)=\ve^{-1} \tilde K(y,\ve^{-1}(x-y))$, for some smooth
      function $\tilde K$. The final formula then holds with $K(\cdot)$
      replaced by $\tilde K(z,\cdot)$.}
\item $\vf_0$ is continuous in each ergodic component and $T$ is differentiable at $z$.
\end{itemize}
Note that, since $|\vf_0|$ must give the same weight to the two ergodic
components it will, in general, be discontinuous at $z$. Let $\alpha,\beta$ be
the left and right limit respectively. Then, introducing coordinates
$x=z+\ve\zeta$ and setting $\theta(y)=\sign y$, we have
$\vf_0(z+\ve\zeta)=\frac{\beta+\alpha}2\theta(\zeta)+\frac{\beta-\alpha}2+o(1)$,
uniformly for $\zeta$ in a compact set. Next, for $\ve$ small enough,
\[
p_\ve(z+\ve\zeta)=\frac12\int_\bR K(y-\zeta)[1-\theta(y)\theta(\zeta)]dy=:p(\zeta).
\]
Note that $p\geq 0$ and $p(\zeta)=0$ if $|\zeta|> 1$. Accordingly,
\[
\begin{split}
\mu_0(p_\ve)
&=m(|\vf_0|p_\ve)=
\ve \frac{\beta-\alpha}2m(\theta p)+\ve\frac{\beta+\alpha}2 m(p)+o(\ve)=:\ve\Gamma+o(\ve).
\end{split}
\]
In addition, for each function $\Psi_\ve$ such that $\Psi_\ve(z+\ve\zeta)=\psi(\zeta)+o(1)$, for some fixed compact support function $\psi$, holds, in the limit $\ve\to0$, $(\Pi_\ve\Psi_\ve)(z+\ve\zeta)=\int_\bR
K(\zeta'-\zeta)\psi(\zeta')d\zeta'+o(1)$ and $(P_0\Psi_\ve)(z+\ve\zeta)=\Lambda\cdot\psi(\Lambda\zeta)+o(1)$ where $\Lambda:=\frac1{T'(z)}>0$.  Hence,
\[
\begin{split}
(P_\ve \Psi_\ve)(z+\ve\zeta)&=\Lambda\int_\bR K(\zeta\Lambda-\zeta')\psi(\zeta')d\zeta'+o(1)\\
&=:(\cK \psi)(\zeta)+o(1).
\end{split}
\]
The above setting applies to $\Psi_\ve=P_0(\Id-\Pi_\ve)\vf_0$, namely
\[ [P_0(\Id-\Pi_\ve)\vf_0](z+\ve\zeta)= \frac{\beta+\alpha}2
[\Lambda\theta(\zeta)-\cK\theta(\zeta)]+o(1).
\]
Indeed, $\cK\theta(\zeta)=\Lambda\theta(\zeta)$ for $|\zeta|\geq\Lambda^{-1}$, hence
$\Lambda \theta-\cK\theta$ is compactly supported.
Thus
\[
q_k=\frac{ (\beta+\alpha)\langle\theta p, \cK^k(\Lambda\Id-\cK)\theta\rangle}{2\Gamma}.
\]
Since the operator $\cK$ has $L^\infty$ norm smaller than $|\Lambda|$, the latter equality implies
\[
\lim_{\ve\to
  0}\frac{\lambda_0-\lambda_\ve}{\ve}=2\Gamma-\frac{(\beta+\alpha)}2\sum_{n=0}^\infty\langle
2\theta p, \cK^n(\Lambda\Id-\cK)\theta\rangle.
\]
On the other hand, a direct computation
(observing the fact that
$\theta(y)=\theta(\Lambda y)$) shows that $2\theta p=(\Id-\cK^*)\theta$, so
\[
\lim_{\ve\to 0}\frac{\lambda_0-\lambda_\ve}{\ve}=
2\Gamma-\frac{\beta+\alpha}2\left[\langle \theta,
  (\Lambda\Id-\cK)\theta\rangle-\lim_{n\to\infty}\langle(\cK^*)^n\theta,(\Lambda\Id-\cK)\theta\rangle\right].
\]
Note that $(\cK^*)^n\theta$ converges pointwise to a function $\theta_\infty$
such that $\theta-\theta_\infty$ is supported in the interval
$[-(1-\Lambda)^{-1},(1-\Lambda)^{-1}]$, see \eqref{eq:theta-infinity}. In
particular, $\cK^*\theta_\infty=\theta_\infty$. Then,
\begin{equation}
  \label{eq:final-exchange}
  \begin{split}
    \lim_{\ve\to 0}\frac{1-\lambda_\ve}{\ve}
    &= 
    2\Gamma-\frac{\beta+\alpha}2\langle \theta-\theta_\infty,
      (\Lambda\Id-\cK)\theta\rangle\\
    &=
    2\Gamma-\frac
    {\beta+\alpha}2\left[\langle(\Id-\cK^*)(\theta-\theta_\infty),\theta\rangle
      +(\Lambda-1)\langle\theta-\theta_\infty,\theta\rangle
    \right]\\
    &=
    2\Gamma-\frac
    {\beta+\alpha}2\left[\langle 2\theta p,\theta\rangle
      +(\Lambda-1)m(1-\theta\theta_\infty)\right]\\
    &=
    \frac{\beta+\alpha}2\left(1-\frac1{T'(z)}\right)m(1-\theta\theta_\infty)
    +\frac{\beta-\alpha}2 m(2\theta p)
  \end{split}
\end{equation}

To make the formula more explicit and transparent let us make some further
remarks. The original dynamical system has a natural invariant measure defined
by $h=\lim_{n\to\infty}\frac 1n\sum_{k=0}^{n-1}P_0^n 1$.  By our assumptions
$h$ is continuous at $z$ and $\{h,\chi h\}$ form a basis for the eigenspace of
the eigenvalue one of the operator $P_0$. Thus $\vf_0=\frac{\alpha+\beta}2\chi
h+\frac{\beta-\alpha}2 h$. Remember that $\alpha,\beta$ are chosen so that
$\int_{I_1} |\vf_0|\,dm=\int_{I_2}|\vf_0|\,dm=\frac 12$. Hence $ \frac
12=\alpha\int_{I_1}h\,dm = \alpha\lim_{n\to\infty}\frac
1{n}\sum_{k=0}^{n-1}\int_0^1\frac{1-\chi}2\circ T^n\,dm = \alpha m(I_1) $ so
that
\begin{equation}\label{eq:alpha-beta}
  \alpha=(2m(I_1))^{-1}\text{ and, analogously, }\beta=(2m(I_2))^{-1}\,.
\end{equation}

To describe the meaning of the two factors involving $\theta$ and
$\theta_\infty$, let $Z$ be a random variable whose distribution has probability density $K$. Then
\begin{equation}
  m(2\theta p)
 =
 -2\,\bE[Z]\ .
\end{equation}
Next let $Z_1,Z_2,\dots$ be independent copies of $Z$. The kernel $\cK^*$
describes a Markov process
\begin{displaymath}
  X_n
  =
  \Lambda^{-1}(X_{n-1}+Z_n)
  =\dots=
  \Lambda^{-n}\left(X_0+\sum_{k=1}^n\Lambda^{k-1}Z_k\right)\,.
\end{displaymath}
The asymptotic behavior of the process $(X_n)$ is determined by the random
variable $W:=\sum_{k=1}^\infty\Lambda^{k-1}Z_k$.  Indeed, let
$X_0=\zeta$. Then $X_n\to+\infty$ if $W>-\zeta$ and $X_n\to-\infty$ if
$W<-\zeta$. (As the $Z_k$ have density, $W=0$ has probability $0$.) As
$((\cK^*)^n\theta)(\zeta)$ is the conditional expectation of $\theta(X_n)$
given $X_0=\zeta$, it follows readily that
\begin{equation}
  \label{eq:theta-infinity}
\begin{split}
  \theta_\infty(\zeta)&=\Prob(X_n\to+\infty|X_0=\zeta)-\Prob(X_n\to-\infty|X_0=\zeta)\\
  &=1-2\,\Prob(W<-\zeta)\,.
\end{split}
\end{equation}
 Hence
\begin{equation}
  m(1-\theta\theta_\infty)=2\,
  \int_0^\infty\left[\bP(W>\zeta)+\bP(-W>\zeta)\right]\,d\zeta
  =2\bE[|W|]\ .
\end{equation}
Note that 
$(1-1/T'(z))m(1-\theta\theta_\infty)=2(1-\Lambda)\bE[|W|]\geq2(1-\Lambda)|\bE[W]|=2|\bE[Z]|=|m(2\theta
p)|$, so
the r.h.s of
\eqref{eq:final-exchange} is clearly positive.

We finish this section with a comment on the term `\emph{exchange rate}. Let
$A_\ve^+:=\{\vf_\ve>0\}$, $A_\ve^-:=\{\vf_\ve<0\}$, and $\tilde
p_\ve:=1_{A_\ve^+}\cdot P_\ve^*1_{A_\ve^-}+1_{A_\ve^-}\cdot
P_\ve^*1_{A_\ve^+}$. $\tilde p_\ve(x)$ is the probability to exchange the sets
$A_\ve^\pm$ under the action of $P_\ve^*$. Now Proposition 5.7 from
\cite{dellnitz-junge-1999} can be rephrased in our setting as
$1-\lambda_\ve=2\int\tilde p_\ve|\vf_\ve|\,dm$,\footnote{The operator $P$ in
  \cite{dellnitz-junge-1999} corresponds to our $P_\ve$ and the signed measure
  $\nu$ to our $\vf_\ve m$.}  so it is nearly twice the ``stationary exchange
rate'' $\int\tilde p_\ve h_\ve\,dm$ where $h_\ve=P_\ve h_\ve$ is the unique
invariant probability density of the perturbed system.  If all $A_\ve$ are identical
(e.g. under suitable symmetry assumptions on the system as in \cite[Corollary
5.9]{dellnitz-junge-1999}), then $\tilde p_\ve$ coincides with $p_\ve$ from
above.

\section{An application to two coupled interval maps}
\label{sec:coupled}
Let $T:[0,1]\to[0,1]$ be a mixing piecewise expanding map as in
section~\ref{sec:applications-1D}. To simplify the discussion we assume that
$\gamma:=\inf|T'|>4$. Let $M:=[0,1]^2$ and define, for
$\delta\in[0,\frac14-\frac1\gamma)$,  the two-dimensional coupled map
\begin{displaymath}
  \hat T:M\to M,\quad \hat T(x,y)=((1-\delta)T(x)+\delta T(y),(1-\delta)T(y)+\delta
  T(x))\ .
\end{displaymath}
It is uniformly piecewise expanding with minimal expansion strictly larger
than $2$ in the sense that
\begin{displaymath}
  \|(D\hat T)^{-1}\|\leq\frac1{\gamma(1-2\delta)}<\frac14\ .
\end{displaymath}
As discussed in great detail in \cite{keller-liverani-2005} there is
$\delta_1\in(0,\frac12-\frac1\gamma]$ such that, for
$\delta\in[0,\delta_1]$, $\hat T$ is mixing in the sense that its
Perron-Frobenius operator $\hat P:\BV(M)\to\BV(M)$ has a unique invariant
probability density $\hat h$ and a spectral gap. Here $\BV(M)$ is the space of
functions of bounded variation on $\bR^2$ that vanish outside $M$.

For $\ve\in E:=[0,\ve_1]$ let $S_\ve:=\{(x,y)\in M:|x-y|\leq\ve\}$. If we
interpret $S_\ve$ as a hole in the phase space $M$, this means that we stop a
trajectory as soon as the two components have synchronized up to a difference
of at most $\ve$. The corresponding Perron-Frobenius operator $\hat
P_\ve:\BV(M)\to\BV(M)$ is defined by $\hat P_\ve(\psi)=\hat
P(\psi\cdot 1_{M\setminus S_\ve})$. Denote the (two-dimensional) variation of a
function $\psi\in\BV(M)$ by $\Var(\psi)$. It is easy to check that $\Var(\hat
P_\ve\psi)\leq2\Var(\hat P\psi)$ so that the family of operators $\hat P_\ve$
satisfies a uniform Lasota-Yorke inequality. (Observe that we made the
generous assumption $\gamma>4$ and consult \cite{keller-liverani-2005}.) In
view of the spectral stability results of \cite{keller-liverani-1999},
assumptions \eqref{gather:P} -- \eqref{gather:phi} are satisfied with
$\nu_0=m$ (the Lebesgue measure on $M$), $\vf_0=\hat h$, $\mu_0=\hat h m$ and
$\lambda_0=1$.

We turn to assumptions \eqref{gather:small1} and \eqref{gather:small2}.
Observe first that 
\begin{equation}
  \label{eq:2Deta}
  \nu_0(P_0-P_\ve)(\psi)=m(\psi 1_{S_\ve})\leq  C\ve\Var(\psi)\ .
\end{equation}
(The constant $C$ depends on the details of the definition of the variation.)
So in particular $\eta_\ve\leq C\ve$ and \eqref{gather:small1} is
satisfied. As $\hat h$ is of bounded variation, we may assume that it is
regularized along the diagonal of $M$ in the sense that for
$1$D-Lebesgue-almost every $x$ the value $\hat h(x,x)$ is the average of the
limits of $\hat h(x-u,x+u)$ and $\hat h(x+u,x-u)$ as $u\searrow 0$. In view of
\eqref{eq:2Deta} we therefore conclude
\begin{equation}
  \label{eq:2DDelta}
  \lim_{\ve\to0}(2\ve)^{-1}\Delta_\ve
  =
  \lim_{\ve\to0}(2\ve)^{-1}\int_{S_\ve}\hat h\,dm
  =
  \int_0^1\hat h(x,x)\,dx\ .
\end{equation}
As $\Var(\hat h1_{S_\ve})\leq2\Var(\hat h)$, we conclude that
\eqref{gather:small2} is satisfied if $\int_0^1\hat h(x,x)\,dx>0$.
It remains to evaluate the $q_k$. As in section~\ref{sec:applications-1D} let
\begin{displaymath}
  \hat U_{k,\ve}:=\hat T^{-1}(M\setminus S_\ve)\cap\dots\cap\hat
  T^{-k}(M\setminus S_\ve)\cap\hat T^{-(k+1)}S_\ve\ .
\end{displaymath}
Then $q_{k,\ve}=\mu_0(S_\ve\cap\hat U_{k,\ve})/\mu_0(S_\ve)$ and, since the
diagonal of $M$ is invariant under $\hat T$, we find
\begin{displaymath}
  q_0=\lim_{\ve\to0}q_{0,\ve}=\frac1{\int_0^1\hat h(x,x)\,dx}\int_0^1\hat h(x,x)\,\frac1{(1-2\delta)|T'(x)|}\,dx
\end{displaymath}
and $q_k=\lim_{\ve\to0}q_{k,\ve}=0$ for all $k\geq1$. So finally,
\begin{equation}
  \label{eq:2Dresult}
  \lim_{\ve\to0}\frac{1-\lambda_\ve}{2\ve}
  =
  \int_0^1\hat h(x,x)\,\left(1-\frac1{(1-2\delta)|T'(x)|}\right)\,dx
\end{equation}

\section{Related results}
\label{sec:comments}

\subsection{Metastable states in molecular dynamics and oceanic  structures}
Phase space methods to characterize biomolecular conformations as metastable
states are used in molecular dynamics (see e.g. \cite{MDHSch-2006} and
references cited there). Very roughly, if the Markov operator $P_\ve$
describes the discrete time evolution of such a system in a fixed time scale
and if $X=D_1\cup D_2$ is a decomposition (up to null sets) of the underlying
phase space, then the \emph{metastability measure} of this decomposition is
defined as
$\meta(D_1,D_2)=\frac12\left[\mu_\ve({D_1})^{-1}\int_{D_1}P_\ve1_{D_1}d\mu_\ve+\mu_\ve({D_2})^{-1}\int_{D_2}P_\ve1_{D_2}d\mu_\ve\right]$. Theorem~1
of \cite{MDHSch-2006} relates $\meta(D_1,D_2)$ to the second eigenvalue of
$P_\ve$ in a way very similar to formula~\eqref{eq:final-exchange}. Reference
\cite{froyland-padberg-2008} is an up-to-date review of the phase space
decomposition approach to metastability in general flow dynamical systems, and
\cite{froyland-et-al} is an application of these ideas to the detection of
coherent oceanic structures. In the framework of weakly coupled rapidly mixing
Markov chains, reference \cite{MSchF-2005} also relates the second largest
eigenvalue of a system to the exchange probabilities between its components.

\subsection{Shannon capacity of constrained systems of binary sequences}
In information theory, the topological entropy of subshifts of $\{0,1\}^\bN$
that are determined by a (short) list $\bL_m=(B_1^{(m)},\dots,B_p^{(m)})$ of
distinct blocks of length $m$ which are not allowed to occur \cite{lind-1989}
is called the Shannon capacity of the system. It is closely related to the
rate of periodic prefix-synchronized (PPS) codes with markers $B_i^{(m)}$,
$i=1,\dots,p$ (see e.g. \cite{kashyap-2003}). For each sequence
$\bL_1,\bL_2,\dots$ of such lists with fixed length $p$ there are a subsequence $(m_j)$
and $z_1,\dots,z_p\in\{0,1\}^\bN$ such that the $B_i^{(m_j)}$ converge to
$z_i$ as $j\to\infty$. We will assume without loss that the full sequences
$(B_i^{(m)})_m$ converge. As the full two-shift is isomorphic (for each
invariant measure of positive entropy) to the doubling map, the shift
constrained by the forbidden blocks in $\bL_m$ is isomorphic to the doubling
map $T$ with ``hole'' $I_m$ being the union of those monotonicity intervals of
$T^m$ labeled by the words in $\bL_m$. Hence the topological entropy
$h(\bL_m)$ of this shift equals $\log(2\lambda_m)$ where $\lambda_m$ is the
leading eigenvalue of the Perron-Frobenius operator of $T$ with hole $I_m$,
compare subsection~\ref{subsec:decay}.

If the limit points $z_i$ belong to $B_i^{(m)}$ for all $i$ and $m$, one can analyses the
situation just as in subsection~\ref{subsec:decay}. Some elementary reasoning
yields the following: for $i=1,\dots,p$ let
$\ell(i)=\min\{j\geq1:T^jz_i\in\{z_1,\dots,z_p\}\}$ with the convention that
$\ell(i)=+\infty$ if no such $j$ exists. Then
\begin{displaymath}
  \lim_{m\to\infty}\frac{\log
    2-h(\bL_m)}{2^{-m}}=\sum_{i=1}^p(1-2^{-\ell(i)})\ .
\end{displaymath}
This is minimal when all $\ell(i)=1$, e.g. if the $z_i$ form just one periodic
orbit. In that case $h(\bL_m)=\log2-p2^{-(m-1)}+o(2^{-m})$, which supports the
conjecture on the precise values of $h(\bL_m)$ for $p=2^k$ and $m\geq k+1$
stated in \cite{kashyap-2003}.

\section{Proof of the main theorem}
\label{sec:proof-main}
As announced in Remark~\ref{rem:summability} we prove Theorem~\ref{thm:main}
under the weaker summability assumption \eqref{gather:Q-weak}. The reader who
does not want to follow this slight generalization of the argument may just
neglect all ``$*$'' attached to the norms. We use the following notation:
\begin{displaymath}
  \kappa_N:=\sum_{n=N}^\infty\sup_{\ve\in E}\|Q_\ve^n\|^*\ .
\end{displaymath}

\begin{lem}\label{lem:two-estimates}There is a constant $C>0$ such that, for
  all $\ve\in E$ and all $N\geq0$,
  \begin{enumerate}[a)]
  \item\label{lem:two-estimates-a}
    $|1-\nu_\ve(\vf_0)|\leq C\,\eta_\ve$,
  \item\label{lem:two-estimates-b}
    $\|Q_\ve^N\vf_0\|\leq C\,\kappa_N\big(\|(P_0-P_\ve)(\vf_0)\|_*+|\lambda_0-\lambda_\ve|\big)$.
  \end{enumerate}
\end{lem}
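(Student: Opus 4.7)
My plan is to derive both parts from the spectral decomposition $\lambda_\ve^{-1}P_\ve = \vf_\ve\otimes\nu_\ve + Q_\ve$, the basic identity \eqref{eq:basic1}, and the summability in \eqref{gather:Q-weak}. I would prove (b) first, since its conclusion feeds directly into the remainder estimate for (a).

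For (b) the plan is to compute a single consecutive difference. Starting from $Q_\ve^n\vf_0 = \lambda_\ve^{-n}P_\ve^n\vf_0 - \nu_\ve(\vf_0)\vf_\ve$, one obtains
\begin{equation*}
Q_\ve^{n+1}\vf_0 - Q_\ve^n\vf_0
= \lambda_\ve^{-(n+1)}P_\ve^n(P_\ve - \lambda_\ve\Id)\vf_0
= \lambda_\ve^{-(n+1)}P_\ve^n\psi_\ve
\end{equation*}
with $\psi_\ve := (\lambda_0-\lambda_\ve)\vf_0 - (P_0-P_\ve)\vf_0$. The key observation is that $\nu_\ve(\psi_\ve)=0$: combining $P_0\vf_0 = \lambda_0\vf_0$ with $\nu_\ve P_\ve = \lambda_\ve\nu_\ve$ yields $\nu_\ve((P_0-P_\ve)\vf_0) = (\lambda_0-\lambda_\ve)\nu_\ve(\vf_0)$. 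Consequently $P_\ve^n\psi_\ve = \lambda_\ve^n Q_\ve^n\psi_\ve$ and the difference simplifies to $\lambda_\ve^{-1}Q_\ve^n\psi_\ve$. Since $\|Q_\ve^M\vf_0\| \le \|Q_\ve^M\|^*\|\vf_0\|_* \to 0$, telescoping from $N$ to $\infty$ gives
\begin{equation*}
Q_\ve^N\vf_0 = -\lambda_\ve^{-1}\sum_{n=N}^\infty Q_\ve^n\psi_\ve,
\end{equation*}
and the bound $\|\psi_\ve\|_* \le C_2|\lambda_0-\lambda_\ve| + \|(P_0-P_\ve)\vf_0\|_*$ completes (b).

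For (a) I would work with the dual object $\mu_\ve := \nu_0 - \nu_\ve$; note $\mu_\ve(\vf_\ve) = 0$ by \eqref{gather:phi}. Combining $\nu_0 P_\ve = \lambda_0\nu_0 - \nu_0(P_0-P_\ve)$ with $\nu_\ve P_\ve = \lambda_\ve\nu_\ve$ and subtracting gives
\begin{equation*}
\mu_\ve(P_\ve - \lambda_\ve\Id) = (\lambda_0-\lambda_\ve)\nu_0 - \nu_0(P_0-P_\ve) =: -\Lambda_\ve.
\end{equation*}
Dividing by $\lambda_\ve$ and substituting $\lambda_\ve^{-1}P_\ve = \vf_\ve\otimes\nu_\ve + Q_\ve$, the rank-one term collapses (as $\mu_\ve(\vf_\ve) = 0$) and leaves
\begin{equation*}
\mu_\ve(\Id - Q_\ve) = \lambda_\ve^{-1}\Lambda_\ve.
\end{equation*}
Pairing both sides with $\sum_{n=0}^N Q_\ve^n\vf_0$ telescopes the left-hand side to $\mu_\ve(\vf_0) - \mu_\ve(Q_\ve^{N+1}\vf_0)$; letting $N\to\infty$ (where the remainder vanishes by (b)) yields
\begin{equation*}
1 - \nu_\ve(\vf_0) = \mu_\ve(\vf_0) = \lambda_\ve^{-1}\sum_{n=0}^\infty \Lambda_\ve(Q_\ve^n\vf_0).
\end{equation*}
The Lipschitz bound \eqref{eq:Lipschitz-at-0} gives $\|\Lambda_\ve\| \le \eta_\ve + \|\nu_0\|\cdot|\lambda_0-\lambda_\ve| \le C\eta_\ve$, while \eqref{gather:Q-weak} yields $\sum_n\|Q_\ve^n\vf_0\| \le C_1 C_2$, and combining these finishes (a).

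The hard part is really bookkeeping: checking that the two perturbation quantities $\psi_\ve$ and $\Lambda_\ve$ lie in the correct spectral subspaces (the kernel of $\nu_\ve$ for the former, the annihilator of $\vf_\ve$ for the latter), so that the telescoping series converge to the objects one wants. Both identities are direct consequences of the basic identity \eqref{eq:basic1} together with the normalization \eqref{gather:phi}, which express the compatibility between perturbed and unperturbed eigendata.
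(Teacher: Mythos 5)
Your proposal is correct and is, modulo bookkeeping, the same argument as the paper's: both parts are obtained by telescoping the Neumann series of $\lambda_\ve^{-1}P_\ve$ against $\vf_0$ and using that $(P_\ve-\lambda_\ve\Id)\vf_0=(\lambda_0-\lambda_\ve)\vf_0-(P_0-P_\ve)\vf_0$ lies in $\ker\nu_\ve$, so that only $Q_\ve^n$-terms survive and \eqref{gather:Q-weak} together with \eqref{eq:Lipschitz-at-0} closes the estimates; indeed your functional $\lambda_\ve^{-1}\Lambda_\ve$ is exactly the paper's $\nu_0(\lambda_0^{-1}P_0-\lambda_\ve^{-1}P_\ve)$. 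The only cosmetic caveat (shared with the paper's own proof) is that the consecutive-difference identity $Q_\ve^{n+1}\vf_0-Q_\ve^n\vf_0=\lambda_\ve^{-1}Q_\ve^n\psi_\ve$ uses $(\lambda_\ve^{-1}P_\ve)^n=\vf_\ve\otimes\nu_\ve+Q_\ve^n$, valid for $n\geq1$, so part b) is really established for $N\geq1$, which is all that is used later.
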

\begin{proof}
  a) As
  $(\Id-\lambda_\ve^{-1}P_\ve)(\lambda_\ve^{-1}P_\ve)^k(\vf_0)=(\Id-\lambda_\ve^{-1}P_\ve)Q_\ve^k(\vf_0)$
  for all $k\geq0$,
  \begin{displaymath}
    \begin{split}
      |1-\nu_\ve(\vf_0)| &=
      \lim_{n\to\infty}\left|\nu_0\left(\vf_0-(\lambda_\ve^{-1}P_\ve)^n(\vf_0)\right)\right|\\
      &\leq
      \sum_{k=0}^\infty\left|\nu_0\left((\Id-\lambda_\ve^{-1}P_\ve)Q_\ve^k(\vf_0)\right)\right|\\
      &=
      \sum_{k=0}^\infty\left|\nu_0\left((\lambda_0^{-1}P_0-\lambda_\ve^{-1}P_\ve)Q_\ve^k(\vf_0)\right)\right|\\
      &\leq |\lambda_0|^{-1}\eta_\ve\sum_{k=0}^\infty\|Q_\ve^k\|^*\|\vf_0\|_*
      +|\lambda_0^{-1}||\lambda_\ve-\lambda_0|\|\nu_0\|\sum_{k=1}^\infty
      \|Q_\ve^k\|^*\|\vf_0\|_*\\
      &= \cO(\eta_\ve)+\cO(\lambda_0-\lambda_\ve)=\cO(\eta_\ve)
    \end{split}
  \end{displaymath}
  where we used \eqref{gather:Q-weak} and \eqref{eq:Lipschitz-at-0} for the last
  estimate.\\ 
  b) For each $N\geq0$ we have
  \begin{displaymath}
    \begin{split}
      \|Q_\ve^N\vf_0\|
      \leq&
      \limsup_{n\to\infty}\|Q_\ve^N(\vf_0-(\lambda_\ve^{-1}P_\ve)^n(\vf_0))\|
      +
      \limsup_{n\to\infty}\|Q_\ve^{N+n}\vf_0\|\\
      \leq&
      \sum_{k=0}^\infty\|Q_\ve^N(\lambda_\ve^{-1}P_\ve)^k(\Id-\lambda_\ve^{-1}P_\ve)(\vf_0)\|+\limsup_{n\to\infty}\kappa_{N+n}\|\vf_0\|_*\\
      \leq&
      |\lambda_0^{-1}|\sum_{k=0}^\infty
      \left(\|Q_\ve^{N+k}(P_0-P_\ve)(\vf_0)\|+|\lambda_0-\lambda_\ve|\|Q_\ve^{N+k+1}\vf_0\|\right)\\
      \leq&
      |\lambda_0^{-1}|\sum_{k=0}^\infty\|Q_\ve^{N+k}\|^*\left(\|(P_0-P_\ve)(\vf_0)\|_*+|\lambda_0-\lambda_\ve|\|\vf_0\|_*\right)\\
      =&
      \cO(\kappa_N)\big(\|(P_0-P_\ve)(\vf_0)\|_*+|\lambda_0-\lambda_\ve|\big)
    \end{split}
  \end{displaymath}
\end{proof}

\begin{proof}[Proof of Theorem~\ref{thm:main}]
  Observe first that by \eqref{eq:basic1}, for each $n>0$,
  \[
  \begin{split}
    &\nu_\ve(\vf_0)\,(\lambda_0-\lambda_\ve)\\
    =&\nu_\ve(\vf_0)\nu_0((P_0-P_\ve)(\vf_\ve))\\
    =&
    \Delta_\ve-\nu_0\left((P_0-P_\ve)(\Id-(\lambda_\ve^{-1}P_\ve)^n)(\vf_0)\right)
    -\nu_0\left((P_0-P_\ve)Q_\ve^n(\vf_0)\right)\\
    =&    
    \Delta_\ve
    -\sum_{k=0}^{n-1}\nu_0\left((P_0-P_\ve)(\lambda_\ve^{-1}P_\ve)^k(\Id-\lambda_\ve^{-1}P_\ve)(\vf_0)\right)
    +\cO(\eta_\ve\|Q_\ve^n\vf_0\|)\\
    =&    
    \Delta_\ve-
    \lambda_0^{-1}\sum_{k=0}^{n-1}\nu_0\left((P_0-P_\ve)(\lambda_\ve^{-1}P_\ve)^k(P_0-P_\ve)(\vf_0)\right)\\
    &+\lambda_0^{-1}(\lambda_0-\lambda_\ve)\sum_{k=1}^{n}\nu_0\left((P_0-P_\ve)(\lambda_\ve^{-1}P_\ve)^{k}(\vf_0)\right)\\
    &+\cO(\kappa_n)\big(|\Delta_\ve|+\eta_\ve|\lambda_0-\lambda_\ve|\big)\hspace*{1.5cm}\text{(by
      Lemma~\ref{lem:two-estimates}\ref{lem:two-estimates-b} and \eqref{gather:small2-strong})}\\
    =&
    \Delta_\ve\left(1-\lambda_0^{-1}\sum_{k=0}^{n-1}\lambda_\ve^{-k}q_{k,\ve}\right)\\
    &+\cO(\eta_\ve)|\lambda_0-\lambda_\ve|\sum_{k=1}^n\left(|\nu_\ve(\vf_0)|\|\vf_\ve\|+\|Q_\ve^k\vf_0\|\right)+\cO(\kappa_n)\big(|\Delta_\ve|+\eta_\ve|\lambda_0-\lambda_\ve|\big)
  \end{split}
\]
where
\begin{equation}
  q_{k,\ve}:=\frac{\nu_0\left((P_0-P_\ve)P_\ve^{k}(P_0-P_\ve)(\vf_0)\right)}{\nu_0\left((P_0-P_\ve)(\vf_0)\right)}.
\end{equation}
Observing Lemma~\ref{lem:two-estimates}\ref{lem:two-estimates-a},
\eqref{gather:phi} and Lemma~\ref{lem:two-estimates}\ref{lem:two-estimates-b},
the error terms can be estimated by
$\cO(\eta_\ve)\,n\,|\lambda_0-\lambda_\ve|+\cO(\kappa_n)|\Delta_\ve|$ so that,
in view of Lemma~\ref{lem:two-estimates}\ref{lem:two-estimates-a}, this
yields, for each $n>0$,
\begin{equation}
  (1+\cO(\eta_\ve))(\lambda_0-\lambda_\ve)(1+n\,\cO(\eta_\ve))
  =
  \Delta_\ve\left(1-\lambda_0^{-1}\sum_{k=0}^{n-1}\lambda_\ve^{-k}q_{k,\ve}\right)
  +\cO(\kappa_n)|\Delta_\ve|.
\end{equation}
If $\Delta_\ve=0$ and $\eta_\ve$ is small, it follows that
$\lambda_\ve=\lambda_0$. Otherwise we assumed in \eqref{eq:main-assumption} that
$q_k=\lim_{\ve\to0}q_{k,\ve}$ exists for each
$k$, and we conclude
\begin{displaymath}
  \lim_{\ve\to0}\frac{\lambda_0-\lambda_\ve}{\Delta_\ve}
  =
  1-\sum_{k=0}^{n-1}\lambda_0^{-(k+1)}q_{k}+\cO(\kappa_n)
\end{displaymath}
for each $n>0$. From this the claim \eqref{eq:main} follows in the limit
$n\to\infty$.
\end{proof}

\end{document}